\newcommand{\defeq}{\colonequals}
\theoremstyle{definition}
\newtheorem{definition}[equation]{Definition}
\newtheorem{remark}[equation]{Remark}
\newtheorem{problem}[equation]{Problem}
\theoremstyle{plain}
\numberwithin{equation}{section}
\newtheorem{theorem}[equation]{Theorem}
\newtheorem{corollary}[equation]{Corollary}
\newtheorem{proposition}[equation]{Proposition}
\newtheorem{lemma}[equation]{Lemma}
\newcommand{\fakeenv}{} 
\newenvironment{restate}[2]  
{ 
 \renewcommand{\fakeenv}{#2} 
 \theoremstyle{plain} 
 \newtheorem*{\fakeenv}{#1~\ref{#2}} 
 \begin{\fakeenv}
}
{
 \end{\fakeenv}
}
\newtheoremstyle{dotless}{}{}{}{}{\bfseries}{}{ }{}
\theoremstyle{dotless}
\newcommand{\inlineand}{\quad \textrm{and} \quad}
\newcommand{\inlineQED}{\pushQED{\qed} \qedhere \popQED}
\DeclareMathOperator{\Id}{Id}
\DeclareMathOperator{\intersection}{\iota}
\newcommand{\NP}{\textbf{NP}}
\DeclareMathOperator{\sgn}{sign}
\DeclareMathOperator{\Mod}{Mod^+} 
\DeclareMathOperator{\nummarkedpoints}{n}
\let\genus\relax
\DeclareMathOperator{\genus}{g}  
\newcommand{\NN}{\mathbb{N}}
\newcommand{\calC}{\mathcal{C}}
\newcommand{\calT}{\mathcal{T}}
\newcommand{\superscript}[1]{\ensuremath{^{\textrm{#1}}}}
\newcommand{\nth}{\superscript{th}}
\newcommand{\from}{\colon}
\newcommand{\canonical}{\sigma}
\title{Deciding reducibility of mapping classes is in $\NP$}
\author{Mark C. Bell\\
University of Illinois\\
\url{mcbell@illinois.edu}}
\begin{document}

\maketitle

\begin{abstract}
For a fixed marked surface $S$, we show that the problem of deciding whether or not a mapping class is reducible lies in $\NP$. As usual this immediately gives an exponential time algorithm to decide whether or not a mapping class is reducible.

To do this we use an (ideal) triangulation to obtain a coordinate system on the set of multicurves on $S$. The result then follows from the fact that the action of the mapping class group of $S$ is piecewise-linear with respect to such a coordinate system and so we are able so show that: if a mapping class $h$ fixes a multicurve then it fixes one whose size is at most exponential in the word length of $h$.

We go on to show how to repeat this construction on invariant subsurfaces. This allows us to show that a similar bound holds for the size of the canonical curve system of a mapping class and so give an alternate, elementary proof of a result of Koberda and Mangahas.
\end{abstract}

\keywords{mapping class group; reducible; canonical curve system; $\NP$.}

\ccode{57M99}

\section{Introduction}

Fix $S$ to be a (possibly disconnected) marked surface in which each component contains at least one marked point and no component is a once or twice marked sphere.

Let $\calC(S)$ denote the set of essential, simple, closed \emph{multicurves} on $S$. This is strictly larger than the set of simplices of the curve complex of $S$ \cite{MasurMinskyII}; it includes multicurves in which some of the components are parallel.

Let $\Mod(S)$ denote the \emph{mapping class group} of $S$, relative to the set of marked points. We fix $X$ to be a finite generating set of $\Mod(S)$ and let $X^*$ denote the set of all words that can be made using the elements of $X$ as letters. We identify a word $h = h_1 \cdots h_k \in X^*$ with the mapping class 
\[ h_k \circ \cdots \circ h_1 \]
and denote its \emph{length} by $\ell(h) \defeq k$. 

\begin{definition}
A mapping class $h \in \Mod(S)$ is \emph{reducible} if there is an \emph{$h$--invariant} multicurve, that is, a multicurve $\gamma \in \calC(S)$ such that $h(\gamma) = \gamma$. A word is \emph{reducible} if its corresponding mapping class is.
\end{definition}

\begin{problem}[The Reducibility Problem]
Given a word $h \in X^*$, decide whether or not $h$ is reducible.
\end{problem}

We begin in Section~\ref{sec:triangulation} by describing how to obtain a coordinate system on $\calC(S)$ from an (ideal) triangulation. This allows us to efficiently represent multicurves by a vector of integers. In Section~\ref{sub:graph_triangulations} we also use these triangulations to give a combinatorial way of representing mapping classes and in Section~\ref{sub:encoding} we show how this can be used to efficiently compute the image of a multicurve under a mapping class.

In Section~\ref{sec:classification_problem} we use the fact that the action of $\Mod(S)$ on $\calC(S)$ is piecewise-linear with respect to this coordinate system to show that: if there is an $h$--invariant multicurve then there is one that is small with respect to our coordinate system (Corollary~\ref{cor:invariant_bound}). Such an invariant multicurve acts as a certificate of reduciblity and is sufficiently small that it can be verified that it is invariant in polynomial time. Thus we deduce that:

\begin{restate}{Corollary}{cor:reducible_NP}
The reducibility problem is in $\NP$.
\end{restate}

Finally, in Section~\ref{sec:subsurfaces}, we describe how to repeat this construction on invariant subsurfaces. Using these structures again, we obtain similar bounds and so are able to also give an elementary proof of a result of Koberda and Mangahas \cite[Theorem~1]{KoberdaMangahas}:

\begin{restate}{Corollary}{cor:canonical_bound}
Fix $\calT$, a triangulation of $S$. For each word $h \in X^*$, the edge vector $\calT(\canonical(h))$ of the canonical curve system of $h$ is $O(\ell(h))$--bounded.
\end{restate}

Unlike the proof given by Koberda and Mangahas, the proof of this bound does not rely on any knowledge of the proof of solvability of the conjugacy problem for $\Mod(S)$, constants related to the curve complex or the finite index classifying covers of $S$.


\subsection{Notation}
\label{sub:notation}

We begin by setting some general notation and noting that all vectors and matrices will have integer entries throughout. Let:
\begin{itemize}
\item $\NN$ denote the set of natural numbers including zero,
\item $|S|$ denote the number of components of $S$,
\item $\genus(S)$ denote the genus of $S$, which we define to be the sum of the genuses of the components when $S$ is disconnected,
\item $\nummarkedpoints(S)$ denote the number of marked points on $S$,
\item $v[i]$ denote the $i\nth{}$ entry of a vector $v$,
\item $v \geq 0$ denote that the vector $v$ is \emph{non-negative}, that is, each entry of $v$ is non-negative,
\item $v \geq_2 0$ denote that $v \geq 0$ and that each entry of $v$ is even, and
\item $\left(\begin{array}{c}A\\ \hline B \end{array}\right)$ denote the join of matrices $A$ and $B$, obtained by stacking their rows.
\end{itemize}

\subsection{Model of computation}
\label{sub:model}

To simplify our analysis we will assume that each variable can hold an arbitary integer and there is is no cost associated to variable access. However, in the problems that we will tackle not only will matrices grow in size with the problem but so will the entries involved. This growth is almost always exponential and so we must take extra care in our analysis.

\begin{definition}
\label{def:bounded}
An integer, vector or matrix is \emph{$k$--bounded} if the log of the absolute value of each entry is at most $k$. That is, if each number involved can be represented by at most $k$ bits.
\end{definition}

As part of our model of computation, we will assume that if $x$ and $y$ are $k$--bounded and $k'$--bounded integers respectively where $k \geq k'$ then:
\begin{itemize}
\item $\sgn(x)$ can be computed in $O(1)$ operations,
\item $x \pm y$ is $(k + 1)$--bounded and can be computed in $O(k)$ operations, and
\item $xy$ is $(k+k')$--bounded and can be computed in $O(k k')$ operations.
\end{itemize}

\section{Triangulations}
\label{sec:triangulation}

\begin{definition}
An \emph{(ideal) triangulation} $\calT$ of $S$ is the isotopy class of a finite, ordered collection of arcs on $S$ which connect between the marked points, have pairwise disjoint interiors and are such that the metric completion of each component of $S - \calT$ is an unmarked triangle.
\end{definition}

When working with a triangulation, we refer to the marked points as \emph{vertices}, the arcs as \emph{edges} and the metric completion of each component of $S - \calT$ as \emph{faces}. We let 
\[ \zeta = \zeta(S) \defeq 6 \genus(S) + 3 \nummarkedpoints(S)- 6 |S| \]
denote the \emph{complexity} of $S$. This is the number of edges of any triangulation of $S$.

The fact that the edges of a triangulation are ordered will be crucial. Changing the ordering of the edges of a triangulation does not produce an equivalent triangulation.

\begin{definition}
Let $\calT$ be a triangulation of $S$ with edges $e_1, \ldots,e_\zeta$ (in order). The \emph{edge vector} of a multicurve $\gamma \in \calC(S)$ with respect to $\calT$ is the vector
\[ \calT(\gamma) \defeq
\left(
\begin{array}{c}
\intersection(\gamma, e_1) \\
\vdots \\
\intersection(\gamma, e_\zeta) \\
\end{array}
\right) \in \NN^\zeta
\]
where $\intersection(x, y)$ is the \emph{geometric intersection number} of $x$ and $y$.
\end{definition}

Although for each triangulation $\calT$ the map $\calT(\cdot) \from \calC(S) \to \NN^\zeta$ is injective it is not surjective. In fact a vector $v \in \NN^\zeta$ corresponds to a multicurve if and only if:
\begin{itemize}
\item for each face of $\calT$ with edges $e_i$, $e_j$ and $e_k$ we have that 
$v[i] + v[j] - v[k] \in 2 \NN$,
and 
\item for each vertex $v$ of $\calT$ there is a face with edges $e_i$, $e_j$ and $e_k$ such that $e_i \cap e_j = v$ and $v[i] + v[j] - v[k] = 0$.
\end{itemize}
We express this requirement as a linear programming problem:

\begin{lemma}
\label{lem:is_multicurve_LP}
For each triangulation $\calT$, there are $O(1)$--bounded $\zeta \times 3\zeta$ matrices $F_1, \ldots, F_k$ such that $v \in \NN^\zeta$ is in the image of $\calT(\cdot)$ if and only if $v \neq 0$ and
\[ F_i \cdot v \geq_2 0 \]
for some $i$. \qed
\end{lemma}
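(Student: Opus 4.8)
The plan is to transcribe, essentially verbatim, the characterisation of edge vectors displayed just above the lemma. That characterisation says that a non-zero $v \in \NN^\zeta$ is an edge vector precisely when: (i) for every face of $\calT$ with edges $e_i, e_j, e_k$, in each of the three ways of choosing which of them plays the role of $e_k$, one has $v[i] + v[j] - v[k] \in 2\NN$; and (ii) for every vertex $p$ of $\calT$ there is a face with edges $e_i, e_j, e_k$ such that $e_i \cap e_j = p$ and $v[i] + v[j] - v[k] = 0$. The first observation is that each atomic condition appearing here is linear in $v$ and can be read off a single integer row with entries in $\{-1, 0, 1\}$: the condition $v[i] + v[j] - v[k] \in 2\NN$ is exactly $v[i] + v[j] - v[k] \geq_2 0$, the relation $\geq_2$ conveniently packaging sign and parity into one inequality, while the equality $v[i] + v[j] - v[k] = 0$ is the conjunction of $v[i] + v[j] - v[k] \geq_2 0$ with $v[k] - v[i] - v[j] \geq_2 0$.

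The next step is to eliminate the disjunction hidden inside (ii). For a vertex $p$, call the faces that meet $p$ along two of their edges the \emph{choices at $p$}; each such choice carries a distinguished naming of its edges $e_i, e_j, e_k$ with $e_i \cap e_j = p$. Call a \emph{selection} any function $c$ assigning to each vertex $p$ one of its choices. For a fixed selection $c$, let $F_c$ be the matrix whose rows are: the row of $v[i] + v[j] - v[k]$ for every face and every choice of $e_k$ (this block is precisely condition (i)), together with, for each vertex $p$, the single further row $v[k] - v[i] - v[j]$ taken from the distinguished naming of $c(p)$. Then $F_c \cdot v \geq_2 0$ holds if and only if condition (i) holds and every selected face $c(p)$ realises the equality at $p$. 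Letting $c$ range over the finitely many selections and invoking the characterisation, a non-zero $v \in \NN^\zeta$ is an edge vector if and only if $F_c \cdot v \geq_2 0$ for some $c$; enumerating the $F_c$ as $F_1, \dots, F_k$, where $k = \prod_p(\text{number of choices at } p)$ depends only on $\calT$, proves the lemma. (The zero vector satisfies every $F_c \cdot v \geq_2 0$, which is exactly why the separate hypothesis $v \neq 0$ is required.)

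Finally I would check the size claims. Every entry of every $F_c$ lies in $\{-1, 0, 1\}$, so the $F_c$ are $O(1)$--bounded, and each has $\zeta$ columns. A triangulation of $S$ has $\tfrac{2}{3}\zeta$ faces, by an Euler-characteristic count, and hence $2\zeta$ face-edge incidences, so the first block of $F_c$ contributes $2\zeta$ rows; the second block contributes one row per vertex, i.e. $\nummarkedpoints(S)$ rows, and since no component of $S$ is a once- or twice-marked sphere a componentwise check gives $\genus(S) + \tfrac{1}{3}\nummarkedpoints(S) \geq |S|$, equivalently $\nummarkedpoints(S) \leq \zeta$, so each $F_c$ has at most $3\zeta$ rows. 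The only step that is more than bookkeeping is verifying that imposing $v[i] + v[j] - v[k] \geq_2 0$ for all three choices of $e_k$ in a single face is faithful to condition (i): one has to note that $v[i] + v[j] - v[k]$, $v[j] + v[k] - v[i]$ and $v[k] + v[i] - v[j]$ differ pairwise by twice an entry of $v$ and therefore always share a common parity, so requiring each of them to be even is consistent and imposes nothing beyond (i); the same observation validates the two-row encoding of the equality in (ii). Everything else is routine.
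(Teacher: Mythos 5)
Your proposal is correct and follows exactly the route the paper intends: the lemma is stated without proof as an immediate transcription of the displayed characterisation of edge vectors, and you carry out precisely that transcription (rows in $\{-1,0,1\}$ for the triangle/parity conditions, the per-vertex disjunction resolved by enumerating one corner choice per vertex, and the count of at most $2\zeta + \nummarkedpoints(S) \leq 3\zeta$ rows), including the parity-consistency and $v \neq 0$ remarks that the paper leaves implicit.
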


This lemma allows us to test whether a $k$--bounded vector is in the image of $\calT(\cdot)$ in $O(k)$ operations.

\subsection{The graph of (ordered) triangulations}
\label{sub:graph_triangulations}

The edge vector of a multicurve depends heavily on the choice of triangulation. We consider two elementary ways of altering a triangulation, both of which change the edge vector of a multicurve predictably. 

Firstly, we may use a permutation to reorder the edges of a triangulation.

\begin{lemma}
\label{lem:permutation_action}
Let $\calT$ and $\calT'$ be triangulations of $S$ which are equivalent up to reordering. Let $\Sigma$ be the permutation matrix corresponding to the reordering. Then for each multicurve $\gamma \in \calC(S)$,
\[ \calT'(\gamma) = \Sigma \cdot \calT(\gamma). \inlineQED{} \]
\end{lemma}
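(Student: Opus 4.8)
The plan is to simply unwind the definition of the edge vector; this lemma is essentially a bookkeeping statement. Since $\calT$ and $\calT'$ are equivalent up to reordering, they consist of the \emph{same} underlying (isotopy class of) collection of arcs on $S$, and only the labelling differs. Write $e_1, \ldots, e_\zeta$ for the edges of $\calT$ in order, and let $\sigma$ be the permutation such that the $j$\nth{} edge of $\calT'$ is $e_{\sigma(j)}$; by definition $\Sigma$ is the permutation matrix with a $1$ in position $(j, \sigma(j))$ for each $j$ and zeros elsewhere.

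First I would record the one genuinely geometric input: the geometric intersection number $\intersection(\gamma, e)$ depends only on the isotopy classes of $\gamma$ and of the arc $e$, and in particular does not see how the edges of a triangulation have been labelled. Consequently the $j$\nth{} entry of $\calT'(\gamma)$ is $\intersection(\gamma, e_{\sigma(j)})$, which is precisely the $\sigma(j)$\nth{} entry of $\calT(\gamma)$, i.e. $\calT(\gamma)[\sigma(j)]$. Then I would compute the matching entry of the right-hand side: $(\Sigma \cdot \calT(\gamma))[j] = \sum_i \Sigma[j][i]\, \calT(\gamma)[i] = \calT(\gamma)[\sigma(j)]$, the sum collapsing to its single nonzero term $i = \sigma(j)$. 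Comparing the two expressions entry by entry over all $j$ yields $\calT'(\gamma) = \Sigma \cdot \calT(\gamma)$.

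The only ``obstacle'' here is notational: one must fix, once and for all, a convention for which permutation $\Sigma$ is taken to encode — the reordering carrying $\calT$ to $\calT'$ rather than its inverse — and apply it consistently. With the convention above the identity comes out exactly as stated; with the opposite convention one would instead obtain $\calT'(\gamma) = \Sigma^{-1} \cdot \calT(\gamma)$. So the mathematical content of the lemma really amounts to pinning down this convention, after which the proof is a two-line verification. I would state the convention explicitly at the start of the proof to avoid any ambiguity downstream, since later sections compose such permutation matrices with the other elementary moves on triangulations.
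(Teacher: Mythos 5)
Your proof is correct and is exactly the definitional unwinding the paper has in mind — the lemma is stated with an immediate QED precisely because it follows from the fact that geometric intersection numbers are blind to the edge labelling, as you observe. Your note about fixing the convention for which permutation $\Sigma$ encodes is a sensible bookkeeping point but adds nothing beyond the paper's implicit argument.
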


Secondly, if the interior of an edge $e$ meets two distinct faces of $\calT$ then we may \emph{flip} it to obtain a new triangulation $\calT'$. This is done by replacing $e$ with $e'$, the opposite diagional of the square containing $e$, as shown in Figure~\ref{fig:flip}. 

\begin{figure}[ht]
\centering
\begin{tikzpicture}[scale=2.5,thick]
\coordinate (A) at (-1,0);
\coordinate (B) at (0,0);
\coordinate (C) at (0,1);
\coordinate (D) at (-1,1);

\draw (C) -- node[above] {$a$} (D);
\draw (B) -- node[right] {$d$} (C);
\draw (A) -- node[below] {$c$} (B);
\draw (A) -- node[left]  {$b$} (D);
\draw (A) -- node[above left] {$e$} (C);

\node (t) at (-0.5,-0.25) {$\calT$};

\coordinate (A2) at (1,0);
\coordinate (B2) at (2,0);
\coordinate (C2) at (2,1);
\coordinate (D2) at (1,1);

\draw (C2) -- (D2);
\draw (B2) -- (C2);
\draw (A2) -- (B2);
\draw (A2) -- (D2);
\draw (B2) -- node[above right] {$e'$} (D2);

\node (t2) at (1.5,-0.25) {$\calT'$};

\draw[thick,->] (0.25,0.5) -- node[above] {Flip} (0.75,0.5);
\end{tikzpicture}
\caption{Flipping an edge of a triangulation.}
\label{fig:flip}
\end{figure}
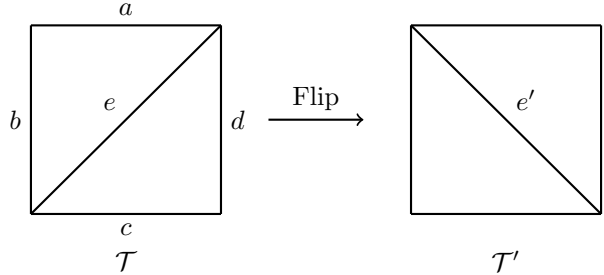

\begin{proposition}[{\cite[Page 30]{Mosher}}]
\label{prop:Pachner_change}
Let $\calT$ and $\calT'$ be triangulations of $S$ and suppose that $\calT'$ is obtained from $\calT$ by flipping the edge $e$. Let $a$, $b$, $c$ and $d$ be the four edges of $\calT$ shown in Figure~\ref{fig:flip}. Then for each multicurve $\gamma \in \calC(S)$,
\[ \intersection(\gamma, e') = \max(\intersection(\gamma, a) + \intersection(\gamma, c), \intersection(\gamma, b) + \intersection(\gamma, d)) - \intersection(\gamma, e). \inlineQED \]
\end{proposition}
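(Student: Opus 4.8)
The plan is to realise $\gamma$ in minimal position and reduce the whole statement to a local count inside the square formed by the two faces of $\calT$ adjacent to $e$. First I would isotope $\gamma$ so that it is in minimal position with respect to $\calT$; by the standard theory of normal curves with respect to an ideal triangulation (see \cite{Mosher}), $\gamma$ then meets each face of $\calT$ in a disjoint collection of arcs, each of which joins two \emph{distinct} edges of that face. Write $Q$ for the square $a \cup b \cup c \cup d$ made up of the two faces meeting $e$, with the edges labelled as in Figure~\ref{fig:flip}, so that $a, b, c, d$ occur in this cyclic order around $\partial Q$ (in particular $a$ is opposite $c$, and $b$ is opposite $d$), the diagonal $e$ separates $\{a,b\}$ from $\{c,d\}$, and the diagonal $e'$ separates $\{a,d\}$ from $\{b,c\}$. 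Each component of $\gamma \cap Q$ joins two distinct sides of $Q$, so there are six types; let $n_{xy}$ count the arcs joining side $x$ to side $y$. Counting endpoints on each side gives $\intersection(\gamma, a) = n_{ab} + n_{ac} + n_{ad}$, and likewise for $b$, $c$ and $d$, while an arc of $\gamma \cap Q$ meets the diagonal $e$ exactly when it joins $\{a,b\}$ to $\{c,d\}$, so $\intersection(\gamma, e) = n_{ac} + n_{ad} + n_{bc} + n_{bd}$.

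The next observation is that, since each component of $\gamma \cap Q$ joins two distinct sides of $Q$, a further isotopy of $\gamma$ supported in $Q$ makes these arcs normal for $\calT'$ as well, without changing any of the numbers $n_{xy}$; as $\calT'$ agrees with $\calT$ outside $Q$, the resulting curve is normal — hence in minimal position — with respect to $\calT'$. So $\intersection(\gamma, e')$ is read off the same way: an arc of $\gamma \cap Q$ meets $e'$ exactly when it joins $\{a,d\}$ to $\{b,c\}$, giving $\intersection(\gamma, e') = n_{ab} + n_{ac} + n_{bd} + n_{cd}$. The key input is now that $\gamma$ is embedded: an arc joining the opposite sides $a$ and $c$ and an arc joining the opposite sides $b$ and $d$ have endpoints that alternate around $\partial Q$, so such arcs would have to cross inside $Q$; hence $\gamma \cap Q$ cannot contain arcs of both types, i.e.\ $\min(n_{ac}, n_{bd}) = 0$.

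It then only remains to add up. Putting $T \defeq n_{ab} + n_{ad} + n_{bc} + n_{cd}$, the formulae above give $\intersection(\gamma, a) + \intersection(\gamma, c) = T + 2n_{ac}$, $\intersection(\gamma, b) + \intersection(\gamma, d) = T + 2n_{bd}$ and $\intersection(\gamma, e) + \intersection(\gamma, e') = T + 2n_{ac} + 2n_{bd}$. Since $\min(n_{ac}, n_{bd}) = 0$ we have $n_{ac} + n_{bd} = \max(n_{ac}, n_{bd})$, so $\intersection(\gamma, e) + \intersection(\gamma, e') = T + 2\max(n_{ac}, n_{bd}) = \max\bigl(\intersection(\gamma,a) + \intersection(\gamma,c),\ \intersection(\gamma,b) + \intersection(\gamma,d)\bigr)$, and rearranging gives the claimed identity. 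I expect the main obstacle to be the set-up rather than any individual step: one has to be sure that minimal position really does leave only the six arc types in $Q$ (no arc with both endpoints on a single side of $Q$), which is exactly where the properties of normal curves with respect to an ideal triangulation enter, and one has to keep straight which pair of sides each diagonal separates so that the resulting $\max$ pairs $a$ with $c$ and $b$ with $d$ exactly as in the statement.
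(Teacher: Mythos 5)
Your argument is correct, and it is exactly the standard normal-coordinate proof of the flip formula that the paper does not spell out at all --- Proposition~\ref{prop:Pachner_change} is simply cited to Mosher and stated with a \emph{qed}, so your write-up supplies the missing details rather than diverging from the paper. The arc-type bookkeeping in the square (six types, endpoint counts, $\min(n_{ac},n_{bd})=0$ by embeddedness, then $\intersection(\gamma,e)+\intersection(\gamma,e')=\max(\intersection(\gamma,a)+\intersection(\gamma,c),\,\intersection(\gamma,b)+\intersection(\gamma,d))$) all checks out against the labelling in Figure~\ref{fig:flip}. The only point worth a sentence in a polished version is the degenerate situation where sides of the square are identified in $S$ (e.g.\ the once-marked torus), where ``$\intersection(\gamma,a)$'' should be read as the count of endpoints on the corresponding side of the square rather than on the edge of $S$; with that reading your computation goes through unchanged.
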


We encapsulate these two moves in a simplicial \emph{graph of triangulations} $G = G(S)$. The vertices of $G$ correspond to triangulations of $S$. Two vertices in $G$ are connected by an edge if and only if their triangulations differ by a reordering of their edges or by a single flip.

We assign each edge of this graph length one and denote the \emph{length} of a path $p$ in $G$ with respect to the induced path metric by $\ell(p)$. 

\subsection{Encoding mapping classes}
\label{sub:encoding}

We note that $G$ is connected \cite[Page 190]{Hatcher} and that the natural action of $\Mod(S)$ on $G$ is \emph{geometric}, that is, a properly discontinuous, cocompact action by isometries. Therefore $G$ and $\Mod(S)$ are quasi-isometric \cite[Proposition~8.19]{BridsonHaefliger} and so we can use paths in $G$ to combinatorally represent mapping classes. Specifically, we represent $h \in \Mod(S)$ via a path $p$ in $G$ from $\calT$ to $h(\calT)$. 

One method of obtaining such a path is to first fix paths representing each of the generators in $X$. Then we can construct a path $p$ representing $h = h_1 \cdots h_k \in X^*$ by concatinating together translated copies of the paths representing $h_1, \ldots, h_k$. Using this construction, $\ell(p) \in O(\ell(h))$ 

Now suppose that $p$ is a path from $\calT$ to $\calT'$. If we are given $v = \calT(\gamma)$ then we may use Lemma~\ref{lem:permutation_action} and Proposition~\ref{prop:Pachner_change} to compute $\calT'(\gamma)$.

\begin{lemma}
\label{lem:compute_curve}
Suppose that $p$ is a path from $\calT$ to $\calT'$ and that $\gamma \in \calC(S)$ is a multicurve. If $v \defeq \calT(\gamma)$ is $k$--bounded then $v' \defeq \calT'(\gamma)$ is $(k+\ell(p))$--bounded and can be computed in at most
\[ O\left(k \ell(p) + \ell(p)^2\right) \]
operations.
\end{lemma}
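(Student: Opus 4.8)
The plan is to induct on the length $\ell(p)$ of the path, processing one edge of $G$ at a time. The base case $\ell(p) = 0$ is trivial: $\calT' = \calT$, so $v' = v$ is $k$--bounded and no operations are needed. For the inductive step, write $p = p' \cdot q$ where $q$ is a single edge of $G$ from an intermediate triangulation $\calT''$ to $\calT'$, and $p'$ has length $\ell(p) - 1$. By the inductive hypothesis, $v'' \defeq \calT''(\gamma)$ is $(k + \ell(p) - 1)$--bounded and is computed from $v$ in $O\bigl(k(\ell(p)-1) + (\ell(p)-1)^2\bigr)$ operations. It then remains to bound the bit-size of $v' = \calT'(\gamma)$ and the cost of the single step from $v''$ to $v'$.

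For the single step there are two cases according to the two types of edges in $G$. If $q$ is a reordering, then by Lemma~\ref{lem:permutation_action} we have $v' = \Sigma \cdot v''$ for a permutation matrix $\Sigma$; this merely permutes the entries, so $v'$ is still $(k+\ell(p)-1)$--bounded, hence certainly $(k+\ell(p))$--bounded, and can be carried out in $O(\zeta) = O(1)$ operations (recall $\zeta$ is a constant once $S$ is fixed). If $q$ is a flip, then by Proposition~\ref{prop:Pachner_change} every entry of $v'$ equals either an entry of $v''$ or an expression of the form $\max(x_a + x_c, x_b + x_d) - x_e$ where $x_a, x_b, x_c, x_d, x_e$ are entries of $v''$. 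Since $v''$ is $(k+\ell(p)-1)$--bounded, each of $x_a + x_c$ and $x_b + x_d$ is $(k+\ell(p))$--bounded (one addition), their maximum is $(k+\ell(p))$--bounded (a sign computation plus a selection), and subtracting $x_e$ costs one more addition and increases the bit-bound by one more; to keep the bookkeeping clean I would simply observe that each entry of $v'$ is bounded in absolute value by $2\max(|x_a|,|x_b|,|x_c|,|x_d|) + |x_e| \le 3 \cdot 2^{k+\ell(p)-1} < 2^{k+\ell(p)+1}$, and absorb the constant — or, more carefully, note $\max(x_a+x_c, x_b+x_d) \le 2^{k+\ell(p)}$ so the difference with $x_e \ge 0$ is at most $2^{k+\ell(p)}$, giving the stated $(k+\ell(p))$--bound cleanly. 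Each of the $\zeta = O(1)$ entries of $v'$ thus costs $O(k + \ell(p))$ operations (the additions on $(k+\ell(p))$--bounded integers dominate), so the whole step costs $O(k + \ell(p))$.

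Summing the inductive-hypothesis cost with the cost of the final step gives
\[
O\bigl(k(\ell(p)-1) + (\ell(p)-1)^2\bigr) + O(k + \ell(p)) = O\bigl(k\ell(p) + \ell(p)^2\bigr),
\]
and the bit-bound telescopes to $k + \ell(p)$, as claimed. The only mild subtlety — and the step I would be most careful about — is making sure the per-step bit growth is exactly $+1$ and not more, since a flip involves both an addition (inside the $\max$) and a subtraction; one has to use the non-negativity of the intersection numbers (all entries of every edge vector lie in $\NN^\zeta$, and the flipped value $\intersection(\gamma,e')$ is itself a non-negative intersection number) to see that $\max(x_a+x_c, x_b+x_d) - x_e$ never exceeds $\max(x_a+x_c, x_b+x_d)$, which is $(k+\ell(p))$--bounded. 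With that observation the induction closes without any loss in the constants.
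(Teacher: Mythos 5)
Your proof is correct and follows essentially the same route as the paper: induction on $\ell(p)$, handling a single reordering via Lemma~\ref{lem:permutation_action} and a single flip via Proposition~\ref{prop:Pachner_change}, with the per-step bound $(k+1)$ and cost $O(k)$ summing to the stated totals. The paper states the single-step bound without elaboration, whereas you carefully justify the $+1$ bit growth per flip using non-negativity of intersection numbers — a worthwhile detail, but not a different argument.
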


\begin{proof}
When $p$ consists of a single relabelling or flip, Lemma~\ref{lem:permutation_action} and Proposition~\ref{prop:Pachner_change} show that $v'$ is $(k+1)$--bounded and can be computed in at most $O(k)$ operations. The result then hold by induction on $\ell(p)$.
\end{proof}

In the case when $p$ is a path representing $h \in \Mod(S)$, as 
\[ \calT(h(\gamma)) = h^{-1}(\calT)(\gamma), \]
this lemma allows us to compute $\calT(h(\gamma))$ from $\calT(\gamma)$ in $O\left(k \ell(p) + \ell(p)^2\right)$ operations.

\begin{remark}
By Alexander's trick, a mapping class fixes every edge vector if and only if it fixes every $1$--bounded edge vector \cite[Proposition~2.8]{FarbMargalit}. However, if $S$ is connected and not a once-marked torus or four times marked sphere then the only mapping class which fixes every edge vector is the identity map. Thus, by testing whether these vectors are fixed by $h$ we can determine whether $h = \Id$ in at most 
\[ O\left(\ell(h)^2\right) \]
operations. This gives the same asymptotic bound on the word problem for mapping class groups as Mosher obtained by showing that $\Mod(S)$ is automatic \cite[Section~3]{Mosher2}.
\end{remark}

Now observe that, as the functions used in Lemma~\ref{lem:permutation_action} and Proposition~\ref{prop:Pachner_change} are piecewise linear\footnote{Here piecewise linear functions are required to have a finite number of cells, each defined by a system of linear inequalities.}, for each pair of triangulations $\calT$ and $\calT'$ there is also a piecewise linear function $f \from \NN^\zeta \to \NN^\zeta$ such that
\[ \calT'(\gamma) = f(\calT(\gamma)) \]
for each muticurve $\gamma \in \calC(S)$. We express this piecewise-linear function using two collections of matrices $\{A_i\}$ and $\{B_i\}$. Here the matrix $A_i$ describes the linear transformation inside of the $i$\nth{} cell of $f$ while the matrix $B_i$ describes the system of linear inequalities which define the $i$\nth{} cell. Given a path  $p$ from $\calT$ to $\calT'$ we can construct these matrices as follows:
\begin{itemize}
\item If $p$ consists of a single reordering then we define its matrices to be:
\begin{eqnarray*}
A_1 &\defeq& \Sigma \\
B_1 &\defeq& (0 \; \cdots \; 0)
\end{eqnarray*}
where $\Sigma$ is the permutation matrix corresponding to the relabelling.
\item If $p$ consists of a single flip of an edge $e$ of $\calT$, as shown in Figure~\ref{fig:flip}, then we define its matrices to be:
\begin{eqnarray*}
A_1 &\defeq& \Id + E_{ea} + E_{ec} - 2 E_{ee} \\
A_2 &\defeq& \Id + E_{eb} + E_{ed} - 2 E_{ee} \\
B_1 &\defeq& E_a + E_c - E_b - E_d \\
B_2 &\defeq& E_b + E_d - E_c - E_a
\end{eqnarray*}
where $\Id$ is the identity matrix, $E_i$ is the $\zeta \times 1$ matrix with a $1$ at position $(i, 1)$ and $0$ everywhere else and $E_{ij}$ is the $\zeta \times \zeta$ matrix with a $1$ at position $(i, j)$ and $0$ everywhere else.
\item Otherwise we decompose $p$ as $p' \cdot p''$ and inductively define its matrices to be:
\[ A_i \defeq A''_k \cdot A'_j \inlineand B_i \defeq \left(\begin{array}{c}
B'_j \\
\hline
B''_k \cdot A'_j
\end{array}\right) \]
where $\{A'_j\}$ and $\{B'_j\}$ are the matrices of $p'$ and $\{A''_k\}$ and $\{B''_k\}$ are the matrices of $p''$.
\end{itemize}

\begin{lemma}
\label{lem:encoding_LP}
Suppose that $p$ is a path from $\calT$ to $\calT'$. Let $\{A_i\}$ and $\{B_i\}$ be the matrices defined above.
\begin{enumerate}
\item Each $A_i$ and $B_i$ is $\ell(p)$--bounded.
\item Each $B_i$ has $O(\ell(p))$ rows.
\item For each multicurve $\gamma \in \calC(S)$ we have that $B_i \cdot \calT(\gamma) \geq 0$ for some $i$.
\item For each multicurve $\gamma \in \calC(S)$, if $B_i \cdot \calT(\gamma) \geq 0$ then $\calT'(\gamma) = A_i \cdot \calT(\gamma)$. \qed
\end{enumerate}
\end{lemma}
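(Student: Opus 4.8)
The plan is to establish all four assertions at once by induction on $\ell(p)$.

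For the base case $p$ is a single move. If it is a reordering with permutation matrix $\Sigma$, then $A_1 = \Sigma$ and $B_1 = (0\;\cdots\;0)$; assertions (1) and (2) are clear, (3) holds because $B_1\cdot\calT(\gamma)=0$ for every $\gamma$, and (4) is exactly Lemma~\ref{lem:permutation_action}. If it is a flip of an edge $e$ as in Figure~\ref{fig:flip}, then $B_1$ and $B_2$ each have a single row and all of $A_1,A_2,B_1,B_2$ have entries in $\{-2,\ldots,2\}$ — coincidences among the edges $a,b,c,d,e$ merely collapse some of the elementary matrices $E_i$, $E_{ij}$ without enlarging any entry — so (1) and (2) hold. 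Since $B_2=-B_1$, at least one of $B_1\cdot\calT(\gamma)$, $B_2\cdot\calT(\gamma)$ is non-negative, giving (3); and if, say, $B_1\cdot\calT(\gamma)\geq 0$ then $\intersection(\gamma,a)+\intersection(\gamma,c)$ attains the maximum in Proposition~\ref{prop:Pachner_change}, so $\intersection(\gamma,e')=\intersection(\gamma,a)+\intersection(\gamma,c)-\intersection(\gamma,e)$, which is precisely the $e$-slot of $A_1\cdot\calT(\gamma)$, every other slot of $A_1\cdot\calT(\gamma)$ agreeing with $\calT(\gamma)$; hence $\calT'(\gamma)=A_1\cdot\calT(\gamma)$, and the case $B_2\cdot\calT(\gamma)\geq 0$ is symmetric. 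This proves (4).

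For the inductive step I would write $p=p'\cdot p''$ with $p'$ a path from $\calT$ to an intermediate triangulation $\calT''$ and $p''$ a path from $\calT''$ to $\calT'$, and let $\{A'_j\},\{B'_j\}$ and $\{A''_k\},\{B''_k\}$ be the matrices of $p'$ and $p''$, so that the cells of $p$ are indexed by pairs $(j,k)$ with $A_{(j,k)}=A''_k A'_j$ and $B_{(j,k)}$ obtained by stacking $B'_j$ above $B''_k A'_j$. Assertion (2) is then immediate, since $B_{(j,k)}$ has as many rows as $B'_j$ and $B''_k$ together, namely $O(\ell(p'))+O(\ell(p''))=O(\ell(p))$. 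For (1), left multiplication by a permutation matrix does not change the size of entries, while left multiplication by a flip matrix — which adds a combination of a bounded number of rows, with coefficients of absolute value at most $2$, into a single row — turns a $k$-bounded matrix into a $(k+O(1))$-bounded one; iterating this additive cost through the recursion that defines the matrices shows that every $A_{(j,k)}$ and $B_{(j,k)}$ has entries bounded by a linear function of $\ell(p)$, as required. Finally, for (3) and (4): given a multicurve $\gamma$, by (3) for $p'$ there is $j$ with $B'_j\cdot\calT(\gamma)\geq 0$, and then (4) for $p'$ gives $\calT''(\gamma)=A'_j\cdot\calT(\gamma)$; applying (3) for $p''$ to $\gamma$ relative to $\calT''$ yields $k$ with $B''_k\cdot\calT''(\gamma)=B''_k A'_j\cdot\calT(\gamma)\geq 0$, so both blocks of $B_{(j,k)}\cdot\calT(\gamma)$ are non-negative and (3) holds for $p$. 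Conversely, if $B_{(j,k)}\cdot\calT(\gamma)\geq 0$ then its two blocks give $B'_j\cdot\calT(\gamma)\geq 0$ and $B''_k A'_j\cdot\calT(\gamma)\geq 0$; the first, with (4) for $p'$, gives $\calT''(\gamma)=A'_j\cdot\calT(\gamma)$, whence the second reads $B''_k\cdot\calT''(\gamma)\geq 0$ and (4) for $p''$ yields $\calT'(\gamma)=A''_k\cdot\calT''(\gamma)=A''_k A'_j\cdot\calT(\gamma)=A_{(j,k)}\cdot\calT(\gamma)$. This proves (4) and closes the induction.

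I do not expect a genuine obstacle; the work is bookkeeping. The one place to be careful is assertion (1): one has to confirm that each elementary move enlarges the matrix entries only additively — in particular that multiplying by a flip matrix costs an additive $O(1)$ rather than a multiplicative factor, which holds because a flip matrix has only a bounded number of nonzero entries, all concentrated in a single row — so that the total incurred along $p$ is linear in $\ell(p)$. The other thing to keep in mind is that (3) and (4) must be proved together, since the inductive step for (3) on $p=p'\cdot p''$ calls on (4) for the prefix $p'$ in order to transport the edge vector $\calT(\gamma)$ forward to $\calT''(\gamma)$.
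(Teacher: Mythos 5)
Your induction is correct and is exactly the routine verification the paper leaves implicit (the lemma is stated with its proof omitted): the base cases follow from Lemma~\ref{lem:permutation_action} and Proposition~\ref{prop:Pachner_change}, the inductive step unwinds the recursive definition of the $A_i$ and $B_i$, and you rightly prove (3) and (4) together. The only caveat is part (1): your argument (an additive $O(1)$ bits per flip, since the updated row is a signed combination of at most three existing rows) gives entries bounded by $2^{c\,\ell(p)}$ for some constant $c>1$, i.e.\ $O(\ell(p))$--boundedness rather than literal $\ell(p)$--boundedness; since the paper only ever uses the $O(\ell(p))$ bound (e.g.\ in Theorem~\ref{thrm:invariant_bound}), this discrepancy is harmless.
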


\section{Determining and certifying reducibility}
\label{sec:classification_problem}

We will express the reducibility problem as an linear programming problem. Small invariant curves will then correspond to small solutions to this linear programming problem. This is closely related to the vertex enumeration problem for unbounded polytopes \cite{BorosElbassionGurvichMakino}. We start with a technical lemma for bounding determinants of matrices.

\begin{lemma}
\label{lem:matrix_bounded}
If $M$ is a $k$--bounded, $n \times n$ matrix then $\det(M)$ is $(k n + n \log(n) / 2)$--bounded.
\end{lemma}

\begin{proof}
This bound follows immediately from Hadamard's inequality \cite[Theorem~14.1.1]{Garling}.
\end{proof}

\begin{proposition}
\label{prop:small_polytope_vectors}
Suppose that $M$ is a $k$--bounded, $m \times n$ matrix. If the polytope
\[ P \defeq \{ v \in \NN^n : M \cdot v \geq 0 \} \]
is non-trivial then it contains a non-trivial $(n k + n \log(n) / 2)$--bounded integral vector.
\end{proposition}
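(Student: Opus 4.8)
The plan is to reduce the statement to the existence of a single extreme ray of $P$ and then to bound one of its generators by Cramer's rule together with Lemma~\ref{lem:matrix_bounded}. To set up, I would write $\bar{M} \defeq \left(\begin{array}{c} M \\ \hline \Id \end{array}\right)$ for the $(m+n) \times n$ matrix obtained by stacking $M$ on top of the identity, so that $P$ is the set of integral points of the polyhedral cone $C \defeq \{ v \in \mathbb{R}^n : \bar{M} \cdot v \geq 0 \}$; note that $\bar{M}$ is again $k$--bounded, as the rows of $\Id$ are $0$--bounded. The cone $C$ is \emph{pointed}, that is, it contains no line: if $v \in C$ and $-v \in C$ then $v \geq 0$ and $-v \geq 0$, forcing $v = 0$.

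First I would produce a non-zero $v_0 \in C$ at which $n - 1$ of the rows of $\bar{M}$ are simultaneously tight and linearly independent. This is the classical fact that a non-trivial pointed polyhedral cone has an extreme ray, but it can also be argued directly: starting from any non-zero element of $P$ (one exists since $P$ is non-trivial), repeatedly move within the linear subspace on which the currently tight rows of $\bar{M}$ all vanish, in some direction $u$ not parallel to the current point; by pointedness the line through the current point in direction $u$ cannot lie in $C$, so one of the two rays leaves $C$, and its last point in $C$ — which is non-zero because $u$ is not parallel to the current point — is a point at which a further row becomes tight, strictly raising the rank of the tight system. Since this rank cannot reach $n$ at a non-zero point, the process stops at a suitable $v_0$.

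Then I would extract a bounded generator of the line $\mathbb{R} v_0$. Let $N$ be the $(n-1) \times n$ submatrix of $\bar{M}$ formed by those $n-1$ tight, independent rows, so $\ker N$ is one-dimensional and equals $\mathbb{R} v_0$. Writing $N^{(i)}$ for the $(n-1) \times (n-1)$ matrix obtained by deleting the $i$\nth{} column of $N$, I would set $w[i] \defeq (-1)^{i+1} \det\bigl(N^{(i)}\bigr)$. Cofactor expansion gives $N \cdot w = 0$ — each entry of $N \cdot w$ is, up to sign, the determinant of a matrix with a repeated row — and $w \neq 0$ since $\operatorname{rank} N = n-1$ makes some maximal minor of $N$ non-zero. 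Hence $w$ spans $\ker N = \mathbb{R} v_0$, so one of $\pm w$ is a positive multiple of $v_0$ and therefore lies in $C$; call that vector $v$. As $v$ is a non-zero element of $C \cap \mathbb{Z}^n = P$, it is a non-trivial integral point of $P$.

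It then remains only to bound $v$: each $N^{(i)}$ is a $k$--bounded $(n-1) \times (n-1)$ matrix, so by Lemma~\ref{lem:matrix_bounded} the integer $|v[i]| = \bigl|\det N^{(i)}\bigr|$ is $\bigl(k(n-1) + (n-1)\log(n-1)/2\bigr)$--bounded, and hence $\bigl(nk + n\log(n)/2\bigr)$--bounded; since this holds for every $i$, $v$ is the required vector. The main obstacle I anticipate is the first step — pinning down a non-zero point at which $n-1$ independent constraints are tight and making the termination of the descent precise; once that point is in hand, the rest is a direct application of the determinant bound.
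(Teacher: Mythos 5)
Your proof is correct and follows essentially the same route as the paper: locate an extremal vector of the cone at which $n-1$ linearly independent constraints (including the implicit non-negativity rows) are tight, and bound an integral generator of that ray by determinants of $k$--bounded submatrices via Lemma~\ref{lem:matrix_bounded}. The only differences are cosmetic — you extract the kernel vector directly as the signed maximal minors of the $(n-1)\times n$ tight system instead of the paper's Cramer's-rule rescaling on an augmented square matrix, and you spell out the descent argument for the existence of the extremal vector, which the paper simply asserts.
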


\begin{proof}
Without loss of generality we may assume that the basis vectors $E_i$ are rows of $M$. Let $v_0$ be an \emph{extremal} vector of $P$, that is, $v_0 \in P - \{0\}$ and there are $n - 1$ linearly independent rows $r_1, \ldots, r_{n-1}$ of $M$ such that $r_i \cdot v_0 = 0$. We claim that we can rescale $v_0$ to obtain $v_1 \in P - \{0\}$, a vector in which each entry is a $(n k + n \log(n) / 2)$--bounded integer.

To see this, define $r_0 \defeq (1 \; \cdots \; 1)$ and let $A$ be the matrix with rows $r_0, r_1, \ldots, r_{n-1}$. Then $v_0$ is the unique solution to
\[ A \cdot v = 
||v_0|| \cdot 
\left(\begin{array}{c}
1 \\
0 \\
\vdots \\
0
\end{array}\right) \]
By Cramer's rule, if $A_i$ is the matrix obtained by replacing the $i$\nth{} column of $A$ by $(1\ 0\ \cdots\ 0)^T$ then the $i$\nth{} entry of $v_0$ is given by 
\[ ||v_0|| \cdot \frac{\det(A_i)}{\det(A)}. \]
Hence, by rescaling $v_0$ by $|\det(A)| / ||v_0||$ we obtain a vector $v_1 \in P - \{0\}$ whose $i$\nth{} entry is $|\det(A_i)|$. However $v_1$ is $(n k + n \log(n) / 2)$--bounded as each $|\det(A_i)|$ is by Lemma~\ref{lem:matrix_bounded}.
\end{proof}

\begin{theorem}
\label{thrm:invariant_bound}
Suppose that $h \in \Mod(S)$ is a mapping class and that $p$ is a path from $\calT$ to $h(\calT)$. If $h$ is reducible then there is an $h$--invariant multicurve $\gamma \in \calC(S)$ such that $\calT(\gamma)$ is $O(\ell(p))$--bounded.
\end{theorem}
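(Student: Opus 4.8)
The plan is to phrase the condition ``$\gamma$ is an $h$--invariant multicurve'' as membership in a union of polytopes cut out by $O(\ell(p))$--bounded matrices, and then apply Proposition~\ref{prop:small_polytope_vectors} to extract a small invariant multicurve. First I would invoke Lemma~\ref{lem:encoding_LP} for the path $p$ from $\calT$ to $h(\calT)$: this gives matrices $\{A_i\}$ and $\{B_i\}$, each $\ell(p)$--bounded and with $O(\ell(p))$ rows, such that for every multicurve $\gamma$ we have $B_i \cdot \calT(\gamma) \geq 0$ for some $i$, and on that cell $\calT(h^{-1}(\gamma)) = h(\calT)(\gamma) = A_i \cdot \calT(\gamma)$. (More precisely, since $p$ runs from $\calT$ to $h(\calT)$, the computation of Lemma~\ref{lem:compute_curve} yields $h(\calT)(\gamma)$, and combined with the identity $\calT(h(\gamma)) = h^{-1}(\calT)(\gamma)$ one gets the image of a multicurve under $h$; I would set this up so that $A_i \cdot \calT(\gamma)$ represents $\calT(h(\gamma))$ on the $i$\nth{} cell — the exact bookkeeping of $h$ versus $h^{-1}$ and $p$ versus $\bar p$ is routine and I would just fix a convention.)

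Next, on the $i$\nth{} cell the equation $h(\gamma) = \gamma$ becomes the linear condition $(A_i - \Id) \cdot \calT(\gamma) = 0$, which I rewrite as the pair of inequalities $(A_i - \Id)\cdot v \geq 0$ and $-(A_i - \Id)\cdot v \geq 0$. Stacking these with $B_i$ and with the rows $\pm E_j$ that encode $v \geq 0$ (and, if desired, the matrices $F_j$ of Lemma~\ref{lem:is_multicurve_LP} that cut out the image of $\calT(\cdot)$), I obtain for each $i$ a matrix $M_i$ that is $O(\ell(p))$--bounded, with $O(\ell(p))$ rows, and such that the $\NN^\zeta$--points $v \neq 0$ of $\{v : M_i \cdot v \geq 0\}$ are exactly the edge vectors of $h$--invariant multicurves lying in the $i$\nth{} cell (intersected with the even/vertex conditions). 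Since $h$ is reducible, some invariant multicurve exists, hence some such polytope $P_i = \{v \in \NN^\zeta : M_i \cdot v \geq 0\}$ is non-trivial. Apply Proposition~\ref{prop:small_polytope_vectors}: because $M_i$ is $O(\ell(p))$--bounded and $\zeta = \zeta(S)$ is a constant, $P_i$ contains a non-trivial integral vector that is $(\zeta \cdot O(\ell(p)) + \zeta \log \zeta / 2) = O(\ell(p))$--bounded.

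The remaining point is that this small vector $v_1$ genuinely corresponds to an $h$--invariant \emph{multicurve}, not merely to an integral solution of the linear system. There are two ways to handle this. The clean way is to have included the matrices $F_j$ from Lemma~\ref{lem:is_multicurve_LP} in the definition of $M_i$ from the outset, so that every non-zero lattice point of $P_i$ is automatically an edge vector of a multicurve; then $v_1 = \calT(\gamma)$ for the desired $\gamma \in \calC(S)$, and $(A_i - \Id)\cdot v_1 = 0$ together with part~(4) of Lemma~\ref{lem:encoding_LP} gives $h(\gamma) = \gamma$. I expect the one subtlety — and the part that needs a careful sentence rather than a routine one — is checking that the extremal/rescaling argument of Proposition~\ref{prop:small_polytope_vectors} is compatible with the ``$\geq_2$'' (evenness) conditions: rescaling $v_0$ by $|\det(A)|$ may or may not preserve parity, so one may instead want to multiply through by an extra factor of $2$ (still $O(\ell(p))$--bounded) to restore evenness, or simply note that $F_j \cdot (2v_1) \geq_2 0$ follows from $F_j \cdot v_1 \geq 0$. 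With that caveat dispatched, $2v_1$ (or $v_1$) is the edge vector of an $h$--invariant multicurve and is $O(\ell(p))$--bounded, proving the theorem.
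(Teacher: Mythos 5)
Your proposal is correct and follows essentially the same route as the paper: stack $(A_i-\Id)$, $-(A_i-\Id)$, $B_i$, the multicurve conditions $F_j$ and non-negativity into one matrix, apply Proposition~\ref{prop:small_polytope_vectors} with $\zeta$ constant, and recover the evenness condition by doubling the small solution. The parity subtlety you flag is handled in the paper exactly as you suggest ($F_j \cdot 2v_0 \geq_2 0$), and the $h$ versus $h^{-1}$ bookkeeping is indeed immaterial since a multicurve is $h$--invariant if and only if it is $h^{-1}$--invariant.
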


\begin{proof}
Let $\{ A_i \}$ and $\{ B_i \}$ be the matrices of Lemma~\ref{lem:encoding_LP}. Additionally, let $\{ F_j \}$ be the matrices of Lemma~\ref{lem:is_multicurve_LP}. Then for each $i$ and $j$, let
\[ M(i, j) \defeq
\left(\begin{array}{c}
A_i - \Id \\
\hline
-(A_i - \Id) \\
\hline
B_i \\
\hline
F_j \\
\hline
\Id
\end{array}\right)\]
We begin by claiming that $h$ is reducible if and only if there is a non-trivial solution to $M(i, j) \cdot v \geq 0$ for some $i$ and $j$.

To prove this claim, firstly suppose that $h(\gamma) = \gamma$ and let $v \defeq \calT(\gamma) \neq 0$. Let $i$ be such that $B_i \cdot v \geq 0$ and so $A_i \cdot v = \calT(h(\gamma)) = v$. Hence
\[ (A_i - \Id) \cdot v \geq 0 \inlineand -(A_i - \Id) \cdot v \geq 0. \]
As $\gamma$ is a multicurve there is a $j$ such that $F_j \cdot v \geq 0$. Thus $v$ is a non-trivial vector and $M(i, j) \cdot v \geq 0$.

Conversely, suppose that $v$ is a non-trivial solution to $M(i, j) \cdot v \geq 0$. Without loss of generality we may assume that the entries of $v$ are non-negative and rational as $M(i, j)$ defines a rational polytope. Furthermore, by scaling $v$ by a sufficiently large natural number we may assume that 
\[ F_j \cdot v \geq_2 0. \]
Hence, there is a multicurve $\gamma \in \calC(S)$ such that $\calT(\gamma) = v$. As $B_i \cdot v \geq 0$ and $v$ lies in the kernel of $A_i - \Id$, we have that $h(\gamma) = \gamma$. This proves the claim.

Now by Lemma~\ref{lem:encoding_LP} each $A_i$ and $B_i$ is $O(\ell(p))$--bounded and so each $M(i, j)$ is too. Therefore by Proposition~\ref{prop:small_polytope_vectors}, there is a non-trivial $O(\ell(p))$--bounded vector $v_0$ such that $M(i, j) \cdot v_0 \geq 0$. Then
\[ F_j \cdot 2 v_0 \geq_2 0. \]
Thus there is a multicurve $\gamma \in \calC(S)$ such that $\calT(\gamma) = 2 v_0$. Hence it follows that $\gamma$ is an $h$--invariant multicurve such that $\calT(\gamma)$ is $O(\ell(p))$--bounded.
\end{proof}

As we may choose a path $p$ from $\calT$ to $h(\calT)$ such that $\ell(p) \in O(\ell(h))$ we immediately obtain that:
\begin{corollary}
\label{cor:invariant_bound}
Fix $\calT$, a triangulation of $S$. If $h \in X^*$ is reducible then there is an $h$--invariant multicurve $\gamma \in \calC(S)$ such that $\calT(\gamma)$ is $O(\ell(h))$--bounded. \qed
\end{corollary}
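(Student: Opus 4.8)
The statement to prove is Corollary~\ref{cor:invariant_bound}, which asserts that for a fixed triangulation $\calT$ and a reducible word $h \in X^*$, there is an $h$--invariant multicurve $\gamma$ whose edge vector $\calT(\gamma)$ is $O(\ell(h))$--bounded.

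\medskip

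The plan is to derive this directly from Theorem~\ref{thrm:invariant_bound}, which already provides exactly this conclusion but phrased in terms of the length $\ell(p)$ of a path $p$ in the graph of triangulations $G$ running from $\calT$ to $h(\calT)$. So the only remaining work is to exhibit such a path $p$ whose length is linear in $\ell(h)$. First I would recall the construction sketched in Section~\ref{sub:encoding}: fix, once and for all, a path $p_x$ in $G$ from $\calT$ to $x(\calT)$ for each generator $x \in X$; since $X$ is finite, these finitely many paths have some maximum length, call it $L \defeq \max_{x \in X} \ell(p_x)$, a constant depending only on $S$ and the chosen generating set (hence absorbed into the $O(\cdot)$). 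Then, writing $h = h_1 \cdots h_k$ with $k = \ell(h)$, I would build $p$ by concatenating translated copies: the first segment is $p_{h_1}$ (from $\calT$ to $h_1(\calT)$), the second segment is $h_1(p_{h_2})$ (from $h_1(\calT)$ to $h_1 h_2 (\calT)$, using that $\Mod(S)$ acts on $G$ by isometries so translating a path of length $\ell(p_{h_2})$ gives a path of the same length), and in general the $j$\nth{} segment is $(h_1 \cdots h_{j-1})(p_{h_j})$ running from $(h_1\cdots h_{j-1})(\calT)$ to $(h_1 \cdots h_j)(\calT)$. The concatenation $p = p_{h_1} \cdot h_1(p_{h_2}) \cdots (h_1\cdots h_{k-1})(p_{h_k})$ is then a well-defined path in $G$ from $\calT$ to $h(\calT)$ with $\ell(p) = \sum_{j=1}^{k} \ell(p_{h_j}) \leq k L \in O(\ell(h))$.

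\medskip

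With this path in hand I would simply invoke Theorem~\ref{thrm:invariant_bound}: since $h$ is reducible, there is an $h$--invariant multicurve $\gamma \in \calC(S)$ with $\calT(\gamma)$ being $O(\ell(p))$--bounded, and since $\ell(p) \in O(\ell(h))$ this is $O(\ell(h))$--bounded, which is the claim. The one subtlety worth spelling out, though it is standard, is that the action of $\Mod(S)$ on $G$ is by simplicial isometries (as noted in the excerpt before Lemma~\ref{lem:compute_curve}), so that for any mapping class $g$ and any edge path $q$ of length $\ell(q)$, the image $g(q)$ is again an edge path with $\ell(g(q)) = \ell(q)$; this is what guarantees each translated segment contributes at most $L$ to the total length. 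I do not anticipate a genuine obstacle here — the heavy lifting (the piecewise-linear encoding, the linear-programming reformulation, and the determinant bound via Cramer's rule and Hadamard's inequality) is all done inside Theorem~\ref{thrm:invariant_bound} and Proposition~\ref{prop:small_polytope_vectors}. The corollary is essentially a bookkeeping step converting ``path length'' into ``word length'', and the proof can be given in a sentence or two, as the excerpt itself indicates by prefacing it with ``As we may choose a path $p$ from $\calT$ to $h(\calT)$ such that $\ell(p) \in O(\ell(h))$ we immediately obtain that\ldots''.
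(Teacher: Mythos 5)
Your proposal is correct and follows exactly the paper's route: construct a path $p$ from $\calT$ to $h(\calT)$ by concatenating translated copies of fixed paths for the generators (so $\ell(p) \in O(\ell(h))$), then apply Theorem~\ref{thrm:invariant_bound}. You merely spell out the bookkeeping that the paper compresses into the phrase ``as we may choose a path $p$ from $\calT$ to $h(\calT)$ such that $\ell(p) \in O(\ell(h))$''.
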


We may use such a multicurve as a certificate that $h \in X^*$ is reducible. Given its edge vector, using Lemma~\ref{lem:is_multicurve_LP} we can first verify that it corresponds to a multicurve $\gamma$ in $O(\ell(h))$ operations. Secondly, by using Lemma~\ref{lem:compute_curve} we can compute $\calT(h(\gamma))$ in $O(\ell(h)^2)$ time. Finally we can verify that $\calT(h(\gamma)) = \calT(\gamma)$, and so verify that $h$ is reducible, in $O(\ell(h))$ time. This shows that:

\begin{corollary}
\label{cor:reducible_NP}
The reducibility problem is in $\NP$. \qed
\end{corollary}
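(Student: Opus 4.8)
The plan is to exhibit the two ingredients required for membership in $\NP$: a polynomial-size certificate for every "yes" instance, and a polynomial-time procedure that verifies it. Given a word $h \in X^*$, I would take as the certificate the edge vector $\calT(\gamma) \in \NN^\zeta$ of an $h$--invariant multicurve $\gamma$, where $\calT$ is the fixed triangulation of Corollary~\ref{cor:invariant_bound}.

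First I would check that this certificate is small. If $h$ is reducible, then Corollary~\ref{cor:invariant_bound} supplies an $h$--invariant multicurve $\gamma$ whose edge vector is $O(\ell(h))$--bounded; since $\zeta = \zeta(S)$ depends only on the fixed surface $S$, the vector $\calT(\gamma)$ is encoded in $O(\ell(h))$ bits, i.e.\ polynomially many in the input size. Conversely, if $h$ is not reducible then no $h$--invariant multicurve exists, so the verifier described below will reject every candidate; thus the usual one-sided correctness of the certificate holds.

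Next I would describe the verifier. On input consisting of the word $h$ and a candidate vector $v \in \NN^\zeta$: \emph{(i)} reject immediately if $v$ is not $O(\ell(h))$--bounded; \emph{(ii)} use Lemma~\ref{lem:is_multicurve_LP} to test whether $v$ is the edge vector of some multicurve $\gamma$, a check of the form $F_i \cdot v \geq_2 0$ for one of finitely many fixed $O(1)$--bounded matrices, and reject if it fails; \emph{(iii)} fix a path $p$ from $\calT$ to $h(\calT)$ with $\ell(p) \in O(\ell(h))$, obtained by concatenating translated copies of pre-chosen paths for the generators in $X$, and apply Lemma~\ref{lem:compute_curve} to compute $\calT(h(\gamma))$; \emph{(iv)} accept iff $\calT(h(\gamma)) = v$ entrywise. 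Correctness is immediate from the cited statements: step \emph{(ii)} accepts exactly those $v$ that are genuine edge vectors, and for such $v$ step \emph{(iv)} accepts exactly when $h(\gamma) = \gamma$, i.e.\ exactly when $\gamma$ witnesses reducibility.

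Finally I would bound the running time in the bit-complexity model of Section~\ref{sub:model}. After step \emph{(i)} we have $v$ being $k$--bounded with $k \in O(\ell(h))$, so by Lemmas~\ref{lem:compute_curve} and~\ref{lem:encoding_LP} every integer arising in the verification stays $O(\ell(h))$--bounded; step \emph{(ii)} costs $O(\ell(h))$ operations, step \emph{(iii)} costs $O(k\ell(p) + \ell(p)^2) = O(\ell(h)^2)$ operations, and step \emph{(iv)} costs $O(\ell(h))$ operations, each operation acting on $O(\ell(h))$--bounded integers. The total is therefore polynomial in $\ell(h)$, hence in the input size, so the reducibility problem lies in $\NP$. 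I expect no genuine obstacle at this stage: all the substantive work is already packaged in Corollary~\ref{cor:invariant_bound} (resting on the piecewise-linearity of the action, Theorem~\ref{thrm:invariant_bound} and Proposition~\ref{prop:small_polytope_vectors}), and the only point demanding care is the routine confirmation that the polynomial arithmetic-operation count yields a genuinely polynomial-time verifier, which holds precisely because all intermediate integers remain $O(\ell(h))$--bounded.
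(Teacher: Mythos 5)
Your proposal is correct and follows essentially the same route as the paper: the paper also takes the edge vector of the small invariant multicurve from Corollary~\ref{cor:invariant_bound} as the certificate, verifies it is a multicurve via Lemma~\ref{lem:is_multicurve_LP}, computes $\calT(h(\gamma))$ via Lemma~\ref{lem:compute_curve} in $O(\ell(h)^2)$ operations, and checks equality. Your additional remarks on boundedness of intermediate integers just make explicit what the paper leaves implicit.
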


As with all problems in $\NP$, this also gives an exponential time algorithm to decide whether or not a mapping class is reducible. We iterate through the exponentially many $O(\ell(h))$--bounded vectors; if there is a non-trivial one which corresponds to a multicurve and is fixed by $h$ then $h$ is reducible, if not then $h$ is irreducible.

\section{Subsurfaces}
\label{sec:subsurfaces}

When $h \in \Mod(S)$ is a reducible mapping class, as well as fixing a multicurve it also fixes a proper subsurface. In order to study the induced mapping class on such an invariant subsurface without talking about surfaces with boundary, we introduce the notion of \emph{crushing} $S$ along a multicurve $\gamma \in \calC(S)$.

\begin{definition}
We \emph{crush} $S$ along $\gamma$ to obtain the (again possibly disconnected) surface $S_\gamma$ by:
\begin{enumerate}
\item removing an open regular neighbourhood of $\gamma$,
\item collapsing the new boundary components to additional marked points, and then
\item removing any components that are twice marked spheres.
\end{enumerate}
See Figure~\ref{fig:crush} for example.
\end{definition}

\begin{figure}[ht]
\centering
\begin{tikzpicture}[scale=2,thick]
\node (a) at (-1.5, 0) {\includegraphics[width=0.5\linewidth, angle=270]{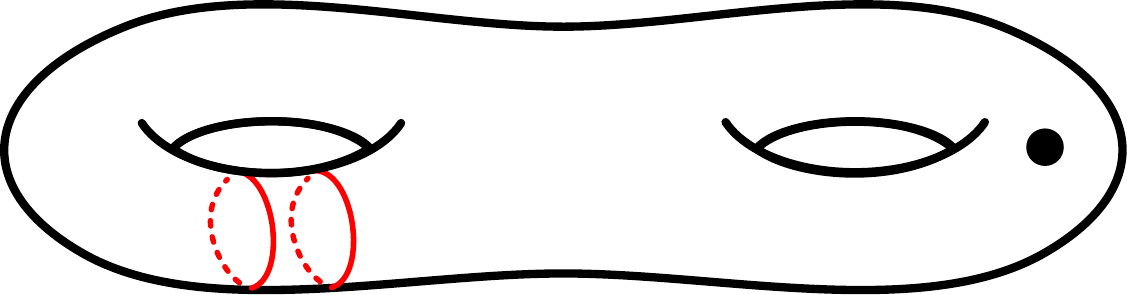}};
\node (b) at ( 1.5, 0) [anchor=east] {\includegraphics[width=0.5\linewidth, angle=270]{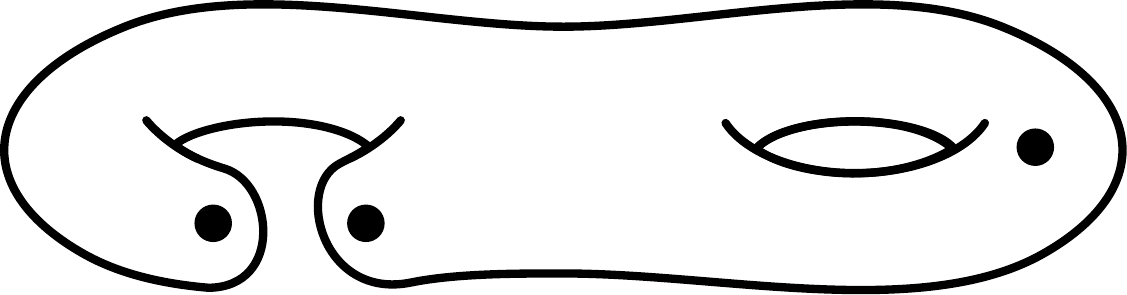}};
\draw [thick,->] ($(a.east)!0.25!(b.west)$) -- node[above] {Crush} ($(a.east)!0.75!(b.west)$);
\end{tikzpicture}
\caption{Crushing along a multicurve.}
\label{fig:crush}
\end{figure}

Now if $\calT$ is a triangulation of $S$ then we may track it as we crush $S$ along a multicurve $\gamma \in \calC(S)$. After possibly collapsing any bigons that are created, this results in a triangulation $\calT_\gamma$ of $S_\gamma$. There is a canonical bijection between the edges of $\calT$ and $\calT_\gamma$ and so $\zeta(S_\gamma) = \zeta$. To see this consider the following construction of $\calT^*_\gamma$, the dual graph of $\calT_\gamma$ inside of $S$:
\begin{enumerate}
\item For each face $f \in F(\calT)$, place a vertex $v$ in the \emph{core} of $f$, that is, the component of $f - \gamma$ which meets all three sides of $f$.
\item Extend three half-edges from $v$ to $\partial f$ whilst avoiding $\gamma$.
\item Extend these half edges along the \emph{corridors} created by parallel strands of $\gamma$ until they connect with another half edge.
\end{enumerate}

\begin{proposition}
\label{prop:crush_paths}
If $p$ is a path from $\calT$ to $\calT'$ then crushing each triangulation of $p$ along $\gamma$, and possibly discarding any repeated triangulations, gives a path $p_\gamma$ from $\calT_\gamma$ to $\calT'_\gamma$ in $G(S_\gamma)$.
\end{proposition}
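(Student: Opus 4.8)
The plan is to reduce to a single edge of $G(S)$ and to show that crushing such an edge produces either a step that stays at one triangulation (which we then discard) or a single edge of $G(S_\gamma)$. Concretely, write $p$ as a sequence of triangulations $\calT = \calT_0, \calT_1, \ldots, \calT_n = \calT'$ in which consecutive triangulations differ by a reordering of the edges or by a single flip. If, for each $i$, the crushed triangulations $(\calT_i)_\gamma$ and $(\calT_{i+1})_\gamma$ are either equal or joined by an edge of $G(S_\gamma)$, then, after deleting any repeated consecutive entries, $(\calT_0)_\gamma, (\calT_1)_\gamma, \ldots, (\calT_n)_\gamma$ is a path $p_\gamma$ in $G(S_\gamma)$ from $\calT_\gamma$ to $\calT'_\gamma$, and moreover $\ell(p_\gamma) \leq \ell(p)$, as the later length estimates require.

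I would dispose of the reordering case first. The crushing construction, and in particular the canonical bijection between the edges of $\calT$ and those of $\calT_\gamma$, makes no reference to the ordering of the edges. Hence if $\calT_{i+1}$ is $\calT_i$ with its edges permuted by $\sigma$, then $(\calT_{i+1})_\gamma$ is $(\calT_i)_\gamma$ with its edges permuted by the same $\sigma$, transported across the bijection. If $\sigma$ is trivial this is a stationary step; otherwise the two crushed triangulations are joined by a reordering edge of $G(S_\gamma)$.

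The substantive case is a flip, and this is where I expect the real work to be. Suppose $\calT_{i+1}$ is obtained from $\calT_i$ by flipping the edge $e$ in position $k$, so that $\calT_i$ and $\calT_{i+1}$ have identical ordered edge lists except in position $k$, where they carry $e$ and $e'$ respectively. The key point, and the main obstacle, is a locality statement for the dual-graph construction of $\calT_\gamma$: for every $j \neq k$, the edge of $(\calT_i)_\gamma$ in position $j$ is the same arc of $S_\gamma$ as the edge of $(\calT_{i+1})_\gamma$ in position $j$. When $e_j$ is neither $e$ nor one of the four sides $a, b, c, d$ of the flipped square, this is immediate, since the faces incident to $e_j$, their cores, and the corridors running alongside $e_j$ are literally unchanged by the flip. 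For the sides $a, b, c, d$ themselves one has to look harder, because the square-face incident to, say, $a$ does change under the flip. Here I would argue that the cores of the two square-faces, together with the corridors joining them across the diagonal, make up a single region $R \subseteq S$ that does not depend on which of $e$, $e'$ is taken as the diagonal, and that crushing carries $R$ equipped with diagonal $e$ (respectively $e'$) to an honest quadrilateral face of $(\calT_i)_\gamma$ (respectively $(\calT_{i+1})_\gamma$) cut by that diagonal, leaving $\partial R$, and hence the dual edges crossing $a, b, c, d$, untouched. This is a finite local verification about how cores and corridors sit inside the flipped square, and it is the only delicate point in the argument.

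Granting the locality statement, $(\calT_i)_\gamma$ and $(\calT_{i+1})_\gamma$ are triangulations of $S_\gamma$ with the same ordered edge list in every position except possibly position $k$, where they carry arcs $\overline{e}$ and $\overline{e'}$ respectively. Deleting the common edges leaves a complementary region with a single non-triangular face. If $\overline{e}$ is a flippable edge of $(\calT_i)_\gamma$, this face is a quadrilateral whose only completions to a triangulation by one further arc are $\overline{e}$ and the other diagonal; since $(\calT_{i+1})_\gamma$ is such a completion, it equals either $(\calT_i)_\gamma$ or its flip at position $k$, and in either case we are done. If instead $\overline{e}$ is non-flippable, then $\overline{e}$ is the only arc completing that face, so $\overline{e'} = \overline{e}$ and the step is stationary. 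This exhausts the cases, and after deleting repeated entries the crushed sequence is the required path $p_\gamma$ from $\calT_\gamma$ to $\calT'_\gamma$ in $G(S_\gamma)$.
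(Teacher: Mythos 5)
Your argument is correct and takes essentially the same route as the paper: dispose of a single reordering trivially, show that a single flip changes the crushed triangulation in at most the one edge dual to the flipped edge (so $(\calT_i)_\gamma$ and $(\calT_{i+1})_\gamma$ are equal or differ by a single flip), and then induct along the path, discarding stationary steps. You merely make explicit the two points the paper compresses into the claim that $\calT_\gamma$ and $\calT'_\gamma$ share at least $\zeta - 1$ edges, namely the locality of the dual-graph construction at the sides $a$, $b$, $c$, $d$ of the flipped square and the non-flippable case.
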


\begin{proof}
The result clearly holds when $p$ consists of a single reordering of the edges of $\calT$. If $p$ consists of a single flip then the combinatorics of $\calT_\gamma$ and $\calT'_\gamma$ agree away from the faces coming from the faces incident to the flipped edge. Thus $\calT_\gamma$ and $\calT'_\gamma$ share at least $\zeta - 1$ edges and so they are either equal or differ by a single flip. The result then follows for all paths by induction on $\ell(p)$.
\end{proof}

In fact when $\calT'$ is obtained by flipping the edge $e$ of $\calT$, we have that $\calT_\gamma$ and $\calT'_\gamma$ are equal if and only if there is an arc of $\gamma$ passing from one side of the square containing $e$ to the opposite side. Following the notation of Figure~\ref{fig:flip}, this occurs if and only if $\intersection(\gamma, a) + \intersection(\gamma, c) \neq \intersection(\gamma, b) + \intersection(\gamma, d)$.

Finally, we note that by construction $\ell(p_\gamma) \leq \ell(p)$.

\subsection{Maximal curves}

When $\gamma \in \calC(S)$ is an $h$--invariant multicurve, we write $h_\gamma \in \Mod(S_\gamma)$ for the mapping class induced on $S_\gamma$ by $h$. Using this notation, if $p$ is a path from $\calT$ to $h(\calT)$ then $p_\gamma$ is a path from $\calT_\gamma$ to $h_\gamma(\calT_\gamma)$.

\begin{definition}
A multicurve $\gamma \in \calC(S)$ is \emph{$h$--maximal} if it is $h$--invariant and $h_\gamma$ is irreducible.
\end{definition}

Now the bijection between edges of $\calT$ and the edges of $\calT_\gamma$ gives a map $\iota_\gamma \from \calC(S_\gamma) \to \calC(S)$, lifting multicurves on $S_\gamma$ back into $S$. Furthermore, if $\calT(\gamma)$ is $k$--bounded then there is a $k$--bounded integer matrix $M$ such that
\[ \calT(\iota_\gamma(\gamma')) = M \cdot \calT_\gamma(\gamma'). \]
However, it will be easier to work with the map:
\[ \overline{\iota_\gamma} \from \calC(S_\gamma) \to \calC(S) \quad \textrm{given by} \quad \overline{\iota_\gamma}(\gamma') \defeq \iota_\gamma(\gamma') \cup \gamma. \]

\begin{lemma}
\label{lem:curve_lifting_bound}
Suppose that $\calT(\gamma)$ is $k$--bounded. If $\gamma' \in \calC(S_\gamma)$ is multicurve and $\calT_\gamma(\gamma')$ is $k'$--bounded then $\calT(\overline{\iota_\gamma}(\gamma'))$ is $(k + k' + \zeta)$--bounded. \qed
\end{lemma}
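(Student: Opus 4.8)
The plan is to unwind the definition of $\overline{\iota_\gamma}(\gamma') = \iota_\gamma(\gamma') \cup \gamma$ and bound the edge vector of this union coordinatewise. The key structural fact is that the geometric intersection number of a disjoint union of multicurves with an edge is the sum of the intersection numbers, so that $\calT(\overline{\iota_\gamma}(\gamma')) = \calT(\iota_\gamma(\gamma')) + \calT(\gamma)$ — at least up to the caveat that $\iota_\gamma(\gamma')$ and $\gamma$ might be isotoped to be disjoint, which is exactly what the crushing construction arranges (the lift $\iota_\gamma(\gamma')$ lives in the complement of a regular neighbourhood of $\gamma$). So the first step is to record that $\calT(\overline{\iota_\gamma}(\gamma'))[i] = \intersection(\iota_\gamma(\gamma'), e_i) + \intersection(\gamma, e_i)$ for each edge $e_i$.

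Next I would invoke the matrix $M$ from the paragraph immediately preceding the lemma: we are told that when $\calT(\gamma)$ is $k$--bounded there is a $k$--bounded integer matrix $M$ (of size $\zeta \times \zeta$, via the canonical edge bijection) with $\calT(\iota_\gamma(\gamma')) = M \cdot \calT_\gamma(\gamma')$. Since $M$ is $k$--bounded and $\calT_\gamma(\gamma')$ is $k'$--bounded, each entry of the product is a sum of $\zeta$ terms each of which is $(k+k')$--bounded; using the model of computation's rules ($xy$ is $(k+k')$--bounded, and a sum of $\zeta$ such terms is $(k+k'+\log\zeta)$--bounded, which since $\zeta = \zeta(S)$ is a constant we can absorb — but the lemma's stated bound is generous with a full additive $\zeta$, so there is plenty of room). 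Concretely, $\calT(\iota_\gamma(\gamma'))$ is $(k + k' + \lceil\log\zeta\rceil)$--bounded, hence certainly $(k + k' + \zeta)$--bounded since $\log \zeta \le \zeta$.

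Finally I would add back $\calT(\gamma)$, which is $k$--bounded and therefore $(k+k'+\zeta)$--bounded, and note that the sum of two $(k+k'+\zeta)$--bounded vectors is $(k+k'+\zeta+1)$--bounded — but again the lemma only asks for $(k+k'+\zeta)$, so I should be slightly more careful in the previous step and only claim $\calT(\iota_\gamma(\gamma'))$ is $(k+k'+\zeta-1)$--bounded, which is fine since $\log\zeta \le \zeta - 1$ for all $\zeta \ge 1$; then the sum with the $k$--bounded vector $\calT(\gamma)$ is $(k+k'+\zeta)$--bounded as claimed. Assembling: $\calT(\overline{\iota_\gamma}(\gamma')) = M \cdot \calT_\gamma(\gamma') + \calT(\gamma)$ and the entrywise bound follows.

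I do not expect any genuine obstacle here; this is essentially a bookkeeping lemma. The one point requiring a sentence of justification is the disjointness claim — that $\iota_\gamma(\gamma')$ can be realized disjointly from $\gamma$, so that intersection numbers with edges add — and this is immediate from the crushing construction since $\iota_\gamma$ lifts multicurves through the complement of a neighbourhood of $\gamma$. The rest is a direct application of the $k$--bounded matrix $M$ and the arithmetic rules in Section~\ref{sub:model}, with the additive constant $\zeta$ chosen comfortably large enough to swallow the $\log\zeta$ coming from summing $\zeta$ entries in a matrix-vector product and the $+1$ from the final addition.
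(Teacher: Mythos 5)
Your argument is correct and is essentially the proof the paper leaves implicit (the lemma is stated with no written proof): writing $\calT(\overline{\iota_\gamma}(\gamma')) = M \cdot \calT_\gamma(\gamma') + \calT(\gamma)$ via the $k$--bounded $\zeta \times \zeta$ matrix $M$ and additivity of intersection numbers for the disjoint union, then bounding each entry by $\zeta \cdot 2^{k+k'} + 2^k \leq 2^{k+k'+\zeta}$, is exactly the intended bookkeeping. Your care with the $\log \zeta$ versus $\zeta$ slack is fine and nothing further is needed.
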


We may repeat the construction of an invariant multicurve on $S_\gamma$ and use this bound to control the complexity of the result we obtain back on $S$. To help us do this rigorously we introduce a second notion of complexity, closely related to the dimension of the curve complex of $S_\gamma$ \cite{MasurMinskyII}.

\begin{definition}
The \emph{complexity} of a multicurve $\gamma \in \calC(S)$ is 
\[ \xi(\gamma) \defeq 3\genus(S_\gamma) + \nummarkedpoints(S_\gamma)- 3 |S_\gamma|. \]
\end{definition}

Now note that if $\gamma \in \calC(S)$ and $\gamma' \in \calC(S_\gamma)$ then
\[ \xi(\overline{\iota_\gamma}(\gamma')) < \xi(\gamma). \]
Additionally, $\xi(\gamma) \leq \zeta$ and if $\xi(\gamma) = 0$ then $\calC(S_\gamma) = \emptyset$ and so $\gamma$ must be $h$--maximal.


\begin{theorem}
\label{thrm:maximal_bound}
Suppose that $h \in \Mod(S)$ is a reducible mapping class and that $p$ is a path from $\calT$ to $h(\calT)$. Then there is an $h$--maximal multicurve $\gamma \in \calC(S)$ such that $\calT(\gamma)$ is $O(\ell(p))$--bounded.
\end{theorem}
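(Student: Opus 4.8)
The plan is to induct on the complexity $\xi$, using Theorem~\ref{thrm:invariant_bound} as the base mechanism and crushing as the inductive step. Concretely, I would argue by (strong) induction on $\xi(S) \leq \zeta$, but it is cleaner to phrase the induction hypothesis relative to a bound: I claim that if $h \in \Mod(S)$ is reducible and $p$ is a path from $\calT$ to $h(\calT)$, then there is an $h$--maximal multicurve $\gamma$ with $\calT(\gamma)$ being $(C \cdot \xi \cdot \ell(p))$--bounded for a constant $C$ depending only on the combinatorics of triangulations (the implicit constant in Theorem~\ref{thrm:invariant_bound}, plus $\zeta$). Since $\xi \leq \zeta$ is itself a constant for fixed $S$, this gives the desired $O(\ell(p))$ bound; but carrying the explicit $\xi$ factor is what makes the induction close.

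First I would apply Theorem~\ref{thrm:invariant_bound} to obtain an $h$--invariant multicurve $\gamma_0 \in \calC(S)$ with $\calT(\gamma_0)$ being $O(\ell(p))$--bounded, say $k_0$--bounded with $k_0 \leq C_0 \ell(p)$. If $h_{\gamma_0}$ is irreducible then $\gamma_0$ is already $h$--maximal and we are done (this includes the base case $\xi(\gamma_0) = 0$, where $\calC(S_{\gamma_0}) = \emptyset$). Otherwise, crush along $\gamma_0$: by Proposition~\ref{prop:crush_paths} the crushed path $p_{\gamma_0}$ runs from $\calT_{\gamma_0}$ to $h_{\gamma_0}(\calT_{\gamma_0})$ in $G(S_{\gamma_0})$, and $\ell(p_{\gamma_0}) \leq \ell(p)$. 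Since $h_{\gamma_0}$ is a reducible mapping class of $S_{\gamma_0}$, which has strictly smaller complexity, the induction hypothesis yields an $h_{\gamma_0}$--maximal multicurve $\gamma' \in \calC(S_{\gamma_0})$ with $\calT_{\gamma_0}(\gamma')$ being $(C \cdot \xi(\gamma_0)' \cdot \ell(p_{\gamma_0}))$--bounded, where $\xi(\gamma_0)'$ is the complexity of $S_{\gamma_0}$.

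Now I would lift back using $\overline{\iota_{\gamma_0}}$ and set $\gamma \defeq \overline{\iota_{\gamma_0}}(\gamma') = \iota_{\gamma_0}(\gamma') \cup \gamma_0$. Two things need checking. First, the bound: by Lemma~\ref{lem:curve_lifting_bound}, $\calT(\gamma)$ is $(k_0 + k' + \zeta)$--bounded where $k'$ is the bound on $\calT_{\gamma_0}(\gamma')$; since $\ell(p_{\gamma_0}) \leq \ell(p)$ and $\xi(\gamma_0)' < \xi(S)$ — and in fact one checks $\xi(\overline{\iota_{\gamma_0}}(\gamma')) < \xi(\gamma_0)$ so the complexities decrease as needed — the total is at most $(C_0 + C(\xi - 1) + \zeta)\ell(p)$, which is $\leq C \xi \ell(p)$ once $C$ is chosen with $C \geq C_0 + \zeta$. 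Second, that $\gamma$ is $h$--maximal: it is $h$--invariant since $\gamma_0$ is $h$--invariant and $\gamma'$ is $h_{\gamma_0}$--invariant, and the operations $\iota_{\gamma_0}$ and crushing are natural with respect to the mapping class action; and the mapping class induced by $h$ on $S_\gamma$ is exactly the mapping class induced by $h_{\gamma_0}$ on $(S_{\gamma_0})_{\gamma'}$, which is irreducible because $\gamma'$ was chosen $h_{\gamma_0}$--maximal. Here I would need to justify carefully that crushing $S$ along $\overline{\iota_{\gamma_0}}(\gamma')$ produces the same marked surface as crushing $S_{\gamma_0}$ along $\gamma'$ (both just cut along the union of curves, collapse neighbourhoods, and discard twice-marked spheres), and that the induced mapping classes agree — this compatibility of iterated crushing is the main obstacle and the one place where the geometric picture of Figure~\ref{fig:crush} has to be turned into a precise argument about marked points and the induced maps. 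Everything else is bookkeeping of the bounded-ness constants, which the explicit $\xi$ factor in the induction hypothesis is designed to absorb.
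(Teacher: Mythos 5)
Your proposal assembles the same ingredients as the paper's proof --- Theorem~\ref{thrm:invariant_bound} to produce an $O(\ell(p))$--bounded invariant multicurve, Proposition~\ref{prop:crush_paths} with $\ell(p_\gamma) \leq \ell(p)$, Lemma~\ref{lem:curve_lifting_bound} to bound the lift, and the strict decrease of $\xi$ --- but you organize them as a bottom-up induction on complexity, lifting an $h_{\gamma_0}$--maximal multicurve from $S_{\gamma_0}$, whereas the paper runs a top-down loop on $S$ itself: it repeatedly replaces the invariant multicurve $\gamma$ by $\overline{\iota_\gamma}(\gamma')$ and stops as soon as $h_\gamma$ is irreducible, which is precisely the definition of $h$--maximal, so maximality is free at termination. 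The practical difference is exactly the step you flag as the main obstacle: your induction needs that crushing $S$ along $\overline{\iota_{\gamma_0}}(\gamma')$ yields the same marked surface and the same induced mapping class as crushing $S_{\gamma_0}$ along $\gamma'$, so that maximality of $\gamma'$ pushes forward to maximality of $\overline{\iota_{\gamma_0}}(\gamma')$; the paper never needs this identification, because each pass of its loop only ever crushes the original $S$ along the current $\gamma$ and only needs $h$--invariance of the new $\gamma$, the bound, and $\xi(\overline{\iota_\gamma}(\gamma')) < \xi(\gamma)$ (so at most $\zeta$ passes, each costing an additive $O(\ell(p)) + \zeta$). The compatibility of iterated crushing is true and provable, but you leave it unproved, and it is avoidable: rephrasing your argument as ``enlarge the invariant multicurve until the induced map on the crushed surface is irreducible'' removes the obligation entirely. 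Two smaller points: the uniformity of your constant $C$ across the crushed surfaces does need the observation that $\zeta(S_\gamma) = \zeta$, which the paper supplies; and $\xi$ is defined in the paper for multicurves, not surfaces, so your induction parameter should be stated as the analogous complexity of the (crushed) surface rather than written $\xi(\gamma_0)'$.
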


\begin{proof}
As $h$ is reducible there is an $h$--invariant multicurve $\gamma \in \calC(S)$ such that $\calT(\gamma)$ is $O(\ell(p))$--bounded by Theorem~\ref{thrm:invariant_bound}.

Now suppose that $\gamma$ is not $h$--maximal. As $h_\gamma$ is reducible, we can reapply Theorem~\ref{thrm:invariant_bound} to the crushed path $p_\gamma$ from $\calT_\gamma$ to $h_\gamma(\calT_\gamma)$. As $\ell(p_\gamma) \leq \ell(p)$, we deduce that there is an $h_\gamma$--invariant multicurve $\gamma' \in \calC(S_\gamma)$ such that $\calT_\gamma(\gamma')$ is $O(\ell(p))$--bounded.

Following this we redefine $\gamma$ to be $\overline{\iota_\gamma}(\gamma')$. This is again an $h$--invariant multicurve and, by Lemma~\ref{lem:curve_lifting_bound}, is still $O(\ell(p))$--bounded. However, doing this decreases $\xi(\gamma)$ and so after repeating this process at most $\zeta$ times $\gamma$ must become $h$--maximal.
\end{proof}

Again, as we may choose a path $p$ from $\calT$ to $h(\calT)$ such that $\ell(p) \in O(\ell(h))$ we immediately obtain that:
\begin{corollary}
\label{cor:maximal_bound}
Fix $\calT$, triangulation of $S$. If $h \in X^*$ is reducible then there is an $h$--maximal multicurve $\gamma \in \calC(S)$ such that $\calT(\gamma)$ is $O(\ell(h))$--bounded. \qed
\end{corollary}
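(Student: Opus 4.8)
The plan is to deduce this immediately from Theorem~\ref{thrm:maximal_bound}: once we have, for each word $h \in X^*$, a path $p$ in $G$ from $\calT$ to $h(\calT)$ of length $O(\ell(h))$, the theorem supplies an $h$--maximal multicurve $\gamma$ with $\calT(\gamma)$ being $O(\ell(p)) = O(\ell(h))$--bounded whenever $h$ is reducible. So the whole proof reduces to exhibiting a short path representing a word.

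Such a path is exactly the encoding constructed in Section~\ref{sub:encoding}. First I would fix, once and for all, a path $p_x$ in $G$ from $\calT$ to $x(\calT)$ for each generator $x \in X$; this is possible because $G$ is connected. Then, for $h = h_1 \cdots h_k \in X^*$, I would obtain $p$ by concatenating suitably translated copies of $p_{h_1}, \ldots, p_{h_k}$, each translated by a partial composition of the $h_i$ so that consecutive copies meet at consecutive translates of $\calT$ and the concatenation runs from $\calT$ to $h(\calT)$ --- this is legitimate because $\Mod(S)$ acts on $G$ by isometries. The length of $p$ is then $\sum_{i=1}^{k} \ell(p_{h_i}) \leq k \max_{x \in X} \ell(p_x)$, which is $O(\ell(h))$ since $X$ is finite.

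Applying Theorem~\ref{thrm:maximal_bound} to this $p$ finishes the proof. The only point worth flagging is that the implied constant in the conclusion absorbs both $\max_{x \in X}\ell(p_x)$ and the constant from Theorem~\ref{thrm:maximal_bound}; each of these depends on $S$, on $\calT$, on the generating set $X$, and on the chosen generator paths, but none depends on $h$, so the stated bound holds uniformly over all reducible words. There is no real obstacle here --- all of the substance lies in Theorem~\ref{thrm:maximal_bound}, and hence in Theorem~\ref{thrm:invariant_bound} together with the subsurface--crushing bookkeeping earlier in this section --- and this corollary is just the one-line specialisation obtained by plugging in the path encoding of Section~\ref{sub:encoding}.
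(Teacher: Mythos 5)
Your proposal is correct and is exactly the paper's argument: the corollary is the immediate specialisation of Theorem~\ref{thrm:maximal_bound} to a path $p$ from $\calT$ to $h(\calT)$ with $\ell(p) \in O(\ell(h))$, obtained by concatenating translated generator paths as in Section~\ref{sub:encoding}. Your extra remarks about where the implied constants come from are accurate but add nothing beyond the paper's one-line deduction.
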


\subsection{The canonical curve system}
\label{sub:canonical}

The \emph{canonical curve system} $\canonical(h) \in \calC(S)$ of a mapping class $h \in \Mod(S)$ is the intersection of all $h$--maximal multicurves \cite[Page 373]{FarbMargalit}. It is non-empty if and only if the mapping class is reducible and of infinite order \cite[Theorem~4.44]{Kida}.

Koberda and Mangahas showed there is an exponential upper bound on the entries of $\calT(\canonical(h))$ \cite[Theorem~1]{KoberdaMangahas}. Corollary~\ref{cor:maximal_bound} also provides an alternate proof of their theorem.

\begin{proposition}
\label{prop:canonical_bound}
Suppose that $h \in \Mod(S)$ is a mapping class and that $p$ is a path from $\calT$ to $h(\calT)$. Then $\calT(\canonical(h))$ is $O(\ell(p))$--bounded.
\end{proposition}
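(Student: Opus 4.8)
The plan is to read this off from Theorem~\ref{thrm:maximal_bound} together with the definition of $\canonical(h)$ as the intersection of all $h$--maximal multicurves, using the fact that the geometric intersection number of a multicurve with a fixed edge is monotone under passing to a sub-multicurve.

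First I would dispose of the degenerate case: if $h$ is not both reducible and of infinite order then $\canonical(h)$ is empty \cite[Theorem~4.44]{Kida}, so $\calT(\canonical(h)) = 0$, which is $0$--bounded and hence certainly $O(\ell(p))$--bounded. So assume $h$ is reducible. By Theorem~\ref{thrm:maximal_bound} applied to the path $p$ there is an $h$--maximal multicurve $\gamma \in \calC(S)$ with $\calT(\gamma)$ being $O(\ell(p))$--bounded. Since $\canonical(h)$ is the intersection of all $h$--maximal multicurves, and $\gamma$ is one of them, every component of $\canonical(h)$ is, up to isotopy, a component of $\gamma$; in other words $\canonical(h)$ is a sub-multicurve of $\gamma$. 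Deleting components of a multicurve cannot increase its geometric intersection number with any fixed curve, so $\intersection(\canonical(h), e_i) \leq \intersection(\gamma, e_i)$ for each edge $e_i$ of $\calT$. Thus $0 \leq \calT(\canonical(h))[i] \leq \calT(\gamma)[i]$ for all $i$, and since $\calT(\gamma)$ is $O(\ell(p))$--bounded so is $\calT(\canonical(h))$.

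The only step that needs any care is the claim that $\canonical(h)$ is a sub-multicurve of each $h$--maximal multicurve, which I expect to be essentially the whole content of the argument: it is immediate from the description of $\canonical(h)$ as an intersection, but one should confirm this agrees with the intended convention for intersecting multicurves (take the isotopy classes of components common to all of them) and note that a sub-multicurve of an essential multicurve is again essential, so that $\canonical(h)$ indeed lies in $\calC(S)$ as asserted. Everything else is a one-line monotonicity observation about intersection numbers.
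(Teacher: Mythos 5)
Your proposal is correct and follows essentially the same route as the paper: bound an $h$--maximal multicurve $\gamma$ and use $\canonical(h) \subseteq \gamma$ together with monotonicity of intersection numbers under passing to sub-multicurves (a step the paper leaves implicit). If anything, your appeal to Theorem~\ref{thrm:maximal_bound} with the path $p$ matches the $O(\ell(p))$ statement more directly than the paper's citation of Corollary~\ref{cor:maximal_bound}, which is phrased in terms of $\ell(h)$.
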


\begin{proof}
If $\canonical(h)$ is empty then the result holds trivially. Otherwise, $h$ is reducible and so by Corollary~\ref{cor:maximal_bound} there is an $h$--maximal multicurve $\gamma \in \calC(S)$ which is $O(\ell(h))$--bounded. Therefore, as $\canonical(h) \subseteq \gamma$, we have that $\canonical(h)$ is $O(\ell(h))$--bounded too.
\end{proof}

Once more, as we may choose a path $p$ from $\calT$ to $h(\calT)$ such that $\ell(p) \in O(\ell(h))$ we immediately obtain that:
\begin{corollary}[{\cite[Theorem~1]{KoberdaMangahas}}]
\label{cor:canonical_bound}
Fix $\calT$, a triangulation of $S$. For each word $h \in X^*$, the edge vector $\calT(\canonical(h))$ of the canonical curve system of $h$ is $O(\ell(h))$--bounded. \qed
\end{corollary}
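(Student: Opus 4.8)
The plan is to deduce this directly from Proposition~\ref{prop:canonical_bound}, so that the only work is to produce a sufficiently short path in the graph of triangulations $G$ realising the mapping class $h$. First I would fix, once and for all, a path in $G$ from $\calT$ to $x(\calT)$ for each generator $x \in X$, and let $L$ be the maximum of their lengths. Given a word $h = h_1 \cdots h_k \in X^*$, I would construct a path $p$ from $\calT$ to $h(\calT)$ by concatenating translated copies of the paths chosen for $h_1, \ldots, h_k$, exactly as in Section~\ref{sub:encoding}. Since $\Mod(S)$ acts on $G$ by isometries, each translated copy again has length at most $L$, so $\ell(p) \leq L k = L\,\ell(h)$; in particular $\ell(p) \in O(\ell(h))$.

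With this path in hand I would apply Proposition~\ref{prop:canonical_bound} to $p$: it gives that $\calT(\canonical(h))$ is $O(\ell(p))$--bounded, and substituting $\ell(p) \in O(\ell(h))$ yields that $\calT(\canonical(h))$ is $O(\ell(h))$--bounded, which is precisely the assertion of the corollary.

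The genuine content, and hence the real obstacle, sits entirely upstream of this statement: in Theorem~\ref{thrm:invariant_bound}, where the existence of a small $h$--invariant multicurve is extracted from the polytope bound of Proposition~\ref{prop:small_polytope_vectors}; in its iteration over crushed subsurfaces, which upgrades this to a small $h$--maximal multicurve (Theorem~\ref{thrm:maximal_bound} and Corollary~\ref{cor:maximal_bound}, using the complexity $\xi$ to guarantee termination after at most $\zeta$ steps); and in the elementary containment $\canonical(h) \subseteq \gamma$ for any $h$--maximal $\gamma$, which transfers the bound on $\gamma$ to $\canonical(h)$. The corollary proper is merely the translation from the path-length formulation to the word-length formulation, i.e. the routine quasi-isometry bookkeeping already set up in Section~\ref{sub:encoding}; no further estimates are required.
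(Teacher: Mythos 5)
Your proposal is correct and follows exactly the paper's route: the corollary is obtained by choosing a path $p$ from $\calT$ to $h(\calT)$ with $\ell(p) \in O(\ell(h))$ (via concatenating translated generator paths, as in Section~\ref{sub:encoding}) and applying Proposition~\ref{prop:canonical_bound}. You also correctly locate the substantive work upstream in Theorem~\ref{thrm:invariant_bound}, Theorem~\ref{thrm:maximal_bound} and the containment $\canonical(h) \subseteq \gamma$, so nothing further is needed.
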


\bibliographystyle{plain}
\bibliography{bibliography}

\end{document}